\theoremstyle{plain}
\newtheorem{Theorem}{Theorem}[section]
\newtheorem{Lemma}[Theorem]{Lemma}
\theoremstyle{definition}
\newtheorem{Remark}[Theorem]{Remark}
\title[Proof of Gagliardo-Nirenberg inequality with historical remarks]{Detailed proof of classical Gagliardo-Nirenberg interpolation inequality with historical remarks}
\author{A. Fiorenza, M.R. Formica, T. Roskovec and F. Soudsk\' y}
\address{Universit\`{a} di Napoli Federico II, Dipartimento di Architettura, via Monteoliveto, 3, 80134 - Napoli (Italy) and Consiglio Nazionale delle Ricerche, Istituto per le Applicazioni del Calcolo Mauro Picone, Sezione di Napoli, via Pietro Castellino, 111, 80131 - Napoli (Italy)}
\email{fiorenza@unina.it}
\address{Universit\`{a} di Napoli Parthenope, via Generale Parisi, 13, 80132 - Napoli (Italy)}
\email{mara.formica@uniparthenope.it}
\address{Faculty of Economics, University of South Bohemia, Studentsk\' a 13, \v Cesk\' e Bud\v ejovice, Czech Republic and Faculty of Information Technology, Czech Technical University in Prague, Th\'akurova 9, 160 00 Prague 6, Czech Republic }
\email{troskovec@ef.jcu.cz}
\address{Faculty of Economics, University of South Bohemia, Studentsk\' a 13, \v Cesk\' e Bud\v ejovice, Czech Republic}
\email{fsoudsky@ef.jcu.cz}
\subjclass[2010]{46E35, 35A23, 26D10}
\keywords{Gagliardo-Nirenberg inequality, interpolation inequality, intermediate derivatives, Sobolev spaces, Sobolev embedding theorem, inequalities for derivatives}
\begin{document}
\begin{abstract}A carefully written Nirenberg's proof of the well known Gagliardo-Nirenberg interpolation inequality for intermediate derivatives in $\mathbb{R}^n$ seems, surprisingly, to be missing in literature. In our paper we shall first introduce this fundamental result and provide information about it's historical background. Afterwards we present a complete, student-friendly proof. In our proof we use the architecture of Nirenberg's proof, the proof is, however, much more detailed, containing also some differences. The reader can find a short comparison of differences and similarities in the final chapter.
\end{abstract}
\maketitle
\section{The result}
The most general Gagliardo-Nirenberg inequality for intermediate derivatives in $\mathbb{R}^n$, as known from the original papers by E. Gagliardo (\cite{Gag2}) and L. Nirenberg (\cite{Nire1}), can be stated as follows:
\begin{Theorem}\label{MAINR}
Let $1\le q\le \infty$ and $j,k\in \mathbb{N}$, $j<k$, and either
\begin{equation}\label{paramet}
\begin{cases}
r=1\\
\displaystyle\frac{j}{k}\le \theta\le 1
\end{cases}
or \quad
\begin{cases}
1<r<\infty\\
k-j-\displaystyle\frac{n}{r}=0,1,2,\ldots\\
\displaystyle\frac{j}{k}\le \theta< 1
\end{cases}\, .
\end{equation}
If we set
\begin{equation}\label{GNGE}
\frac{1}{p}=\frac{j}{n}+\theta\left(\frac{1}{r}-\frac{k}{n}\right)+\frac{1-\theta}{q}\, ,
\end{equation}
then there exists constant $C$ independent of $u$ such that
\begin{equation}\label{GGNII}
\|\nabla^{j}u\|_{p}\leq C\|\nabla^{k}u\|_{r}^{\theta}\|u\|_{q}^{1-\theta}\qquad\forall u\in L^q(\mathbb{R}^n)\cap W^{k,r}(\mathbb{R}^n)\, .
\end{equation}
\end{Theorem}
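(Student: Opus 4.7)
The argument follows Nirenberg's original architecture: first establish a base case (a one-dimensional, second-order estimate), then promote it to $\mathbb{R}^n$ by a slice-by-slice application, then run an induction on the order of differentiation to reach arbitrary $j<k$, and finally sweep across the interval of admissible $\theta$ using log-convexity of $L^p$-norms.

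The base case I would prove first is the one-dimensional inequality: for $u\in C^{2}_c(\mathbb{R})$,
\[
\|u'\|_{s}\le C\,\|u''\|_{r}^{1/2}\|u\|_{q}^{1/2}, \qquad \tfrac{1}{s}=\tfrac{1}{2}\bigl(\tfrac{1}{r}+\tfrac{1}{q}\bigr),
\]
which is \eqref{GGNII} in the configuration $n=j=1$, $k=2$, $\theta=1/2$ (the exponent $s$ being forced by the scale invariance $u(x)\mapsto u(\lambda x)$, consistently with \eqref{GNGE}). The core identity, obtained by integration by parts, is
\[
\int_{\mathbb{R}}|u'|^{s}\,dx \;=\; -(s-1)\int_{\mathbb{R}} u\,|u'|^{s-2}u''\,dx,
\]
after which Hölder's inequality with exponents matched to $r$ and $q$ closes the estimate. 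A standard smoothing ($|u'|\rightsquigarrow (\varepsilon+|u'|^2)^{1/2}$) legitimises the manipulation of $|u'|^{s-2}$ when $s<2$, and a cutoff/density argument extends the inequality to every $u\in L^q\cap W^{2,r}(\mathbb{R})$.

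From this 1D estimate I would lift to $\mathbb{R}^n$ by applying it on every line parallel to a coordinate axis and then integrating in the transverse variables via Fubini and a multilinear Hölder inequality; this produces the $n$-dimensional version for $j=1$, $k=2$, $\theta=1/2$. An induction on $k$, using the case already known on $\nabla u$ in place of $u$, yields all $j=1$, $k\ge 2$, and a second induction on $j$, applying the $j=1$ case to $\nabla^{j-1}u$, reaches all $j<k$; the endpoint $\theta=j/k$ is the one naturally produced at each inductive step. The intermediate values $\theta\in(j/k,1)$ are then recovered by interpolation: if \eqref{GGNII} holds for two admissible triples $(p_0,\theta_0)$ and $(p_1,\theta_1)$, Hölder's inequality gives it for every convex combination, and the relation \eqref{GNGE} is preserved because it is affine in $\theta$.

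The main obstacle, beyond the bookkeeping of exponents and constants, is the delicate handling of the forbidden endpoint $\theta=1$ when $1<r<\infty$ and $k-j-\frac{n}{r}$ is a non-negative integer: the inequality genuinely fails there (it would amount to an unavailable Sobolev embedding into $L^\infty$ on the critical step), so the induction must be arranged so as never to generate that combination, and the interpolation sweep must stop strictly short of $\theta=1$. I would flag this restriction already at the base case, verify at every inductive step that the excluded exponent configuration is not reintroduced, and postpone to the very end the approximation argument needed to upgrade the estimate from $C^\infty_c$ to $L^q\cap W^{k,r}(\mathbb{R}^n)$.
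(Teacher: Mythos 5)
Your overall architecture (one-dimensional second-order base case, lift to $\mathbb{R}^n$ by Fubini and H\"older along coordinate directions, double induction on the orders $k$ and $j$ producing the endpoint $\theta=j/k$, interpolation for intermediate $\theta$) coincides with the paper's. The genuine problem is in your base case. The identity $\int|u'|^{s}=-(s-1)\int u\,|u'|^{s-2}u''$ followed by the three-factor H\"older inequality requires the exponent $a$ for the factor $|u'|^{s-2}$ to satisfy $\frac1q+\frac1a+\frac1r=1$ with $(s-2)a=s$, i.e.\ $a=\frac{s}{s-2}$; this is a legitimate H\"older exponent only when $s\ge 2$, equivalently $\frac1r+\frac1q\le 1$. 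The theorem, however, allows all $1\le q\le\infty$, $1\le r<\infty$, and in particular the case $r=1$ (the first branch of \eqref{paramet}), where typically $\frac1r+\frac1q>1$ and $s<2$. For $s=1$ (e.g.\ $r=q=1$) the identity itself degenerates: the prefactor $-(s-1)$ vanishes while $|u'|^{-1}$ is non-integrable near the zeros of $u'$, and the correct integration by parts produces boundary/Dirac contributions at the sign changes of $u'$, not zero. Your smoothing $(\varepsilon+|u'|^2)^{1/2}$ cures only the pointwise definition of $|u'|^{s-2}u'$, not the failure of the H\"older step, so the base case is simply not proved on a substantial part of the admissible range; and since the inductions feed intermediate exponents $\tilde p,\tilde q$ back into the base inequality, the restriction contaminates the whole scheme. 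The paper avoids this by a different mechanism: a local two-term estimate $\|u'\|_{p,I}\le C\bigl(l^{1+\frac1p-\frac1r}\|u''\|_{r,I}+l^{-1+\frac1p-\frac1q}\|u\|_{q,I}\bigr)$ on intervals (obtained from a Morrey-type bound after subtracting the affine part of $u$), followed by a covering of the support by intervals on which the two terms balance and which have overlap at most $4$; summing and applying H\"older for series gives \eqref{unodimens} for every $r,q\ge1$ uniformly. You would need either to adopt such a covering argument or to supply a separate treatment of the range $\frac1r+\frac1q>1$.

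Two smaller points. First, your interpolation sweep for $\theta\in(j/k,1)$ uses the $\theta=1$ endpoint as an anchor, which is the Sobolev embedding $\|\nabla^ju\|\lesssim\|\nabla^ku\|_r$; that is a separate nontrivial theorem you do not prove (the paper likewise defers it, proving only the $\theta=j/k$ case in full and sketching the interpolation in a remark, so you should at least state it as an explicit external input). Second, the log-convexity step $\|f\|_p\le\|f\|_{p_0}^{\alpha}\|f\|_{p_1}^{1-\alpha}$ is indeed compatible with \eqref{GNGE} being affine in $\theta$, so that part of your plan is fine once the two endpoints are available.
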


The case $\theta=1$ corresponds to the Sobolev inequality. If $\theta=1$ and $k-j-n/r$ is a nonnegative integer, the inequality does not hold. It would correspond to the critical cases of the Sobolev inequality, where some estimates for derivatives are still true, but leave the scale of Lebesgue spaces. We omit further comments on this classical result. The reader is referred to the original paper by L. Nirenberg \cite{Nire1} and the wide literature, classical and recent, on this well-known result (see e.g. \cite{Brezisuni,Maz,AdaFou,Nire2,Frie,Lady,Kjf,GilTru,Zie,EvaGar}).

It would be impossible to describe in a satisfactory way the popularity of Theorem \ref{MAINR} in the Sobolev spaces theory and its applications in the PDEs theory. The development of literature connected with inequality \eqref{GGNII} in the past 60 years has been really impressive because of the several variants and generalizations considered (see e.g. \cite{Solon,Bro2,Golo,CapFioKala,CriMare,Giannetti,KalaKrb,BrezisGaNi,Triebel, Strz, McCRoRo,Form}). Since the first appearance of the Gagliardo-Nirenberg interpolation inequality and the Sobolev inequality (see e.g. \cite{Brezisuni,Leoni}), authors of many papers and books consider a class of inequalities (maybe originated from some old literature, see \cite{Ilin,Ehrling,Bro1}), which may differ one from another because of the domain (and its regularity) of the function $u$ involved, and/or because of the parameters involved, and/or because of the use of modulars or norms, as it happened in the original results by Gagliardo and by Nirenberg (see the short historical background below).

In this paper we restrict our attention on the original results in \cite{Gag2,Nire1}, limiting ourselves to the case where the domain is the whole space. We do this in order to avoid the technicalities (for instance, working with the domains with the cone property as in E. Gagliardo's paper \cite{Gag2}).

The motivation of this paper is simple. The proof of Theorem \ref{MAINR}, perhaps because of its tedious steps, is missing except for the special cases of the new proofs of variants and generalizations and the new proofs which use techniques established after the original papers by E. Gagliardo and L. Nirenberg (see e.g. \cite{MazSha}). C. Miranda (see the paper \cite{Mirandalinc}, written in Italian), in order to motivate his improvement of the original result by E. Gagliardo and L. Nirenberg (see also the further extension by A. Canfora, \cite{Canfora}), considers the following inequality \eqref{mir}, special case ($k=2$, $j=1$, $\theta=1/2$), of inequality \eqref{GGNII}, except for the last term in the right hand side, due to the fact that \eqref{mir} has been stated in bounded domains with cone property: 
\begin{equation}\label{mir}
\|\nabla u\|_{p}\le C (\|\nabla^2u\|_{r}^{1/2}\|u\|_{q}^{1/2}+\|u\|_{q})\, , \qquad p=\frac{2rq}{r+q}\, .
\end{equation}
He writes that \eqref{mir} \sl actually does not have a full proof in literature. In fact, the proof by Nirenberg is given just through the main points \rm (by the way, this is written clearly by L. Nirenberg himself) \sl and just in the case $r>1$, while the E. Gagliardo's proof, even if complete in the case $r=1$, when $r>1$ is given when $q\ge r/(r-1)$. \rm In spite of the relevant refinement proven by C. Miranda in \cite{Mirandalinc}, unfortunately the proof of the Theorem \ref{MAINR} remains surprisingly missing. The reader, must go through some special cases, and some time must be spent to drop the technicalities to get any refinements. In the case of C. Miranda's paper \cite{Mirandalinc}, in his Theorem 1.1 he states a refinement for domains with the cone property, but the details of the proof are given just for functions of one variable. All the remaining machinery being made \sl as in Gagliardo, \rm while in his Theorem 4.1 he states a quite general result whose proof uses the original Gagliardo-Nirenberg inequality.

Our intention is to present the main part of the original proof by L. Nirenberg of the Theorem \ref{MAINR}, in an elementary, detailed version, accessible even for students with basic background in measure theory. It should be noticed that E. Gagliardo in \cite{Gag2} treats in fact just the case $\theta=j/k$ while L. Nirenberg in \cite{Nire1} treats the extreme cases $\theta=j/k$ and $\theta=1$. The latter one, which is the Sobolev inequality, is treated using an argument (already known since long time) by Morrey. The interpolation argument for general $\theta$ is mentioned in both papers by E, Gagliardo (\cite{Gag2}) and L. Nirenberg (\cite{Nire1}), but it is not developed in detail. The core of Theorem \ref{MAINR} is therefore in the case $\theta=j/k$ and it will be treated in this paper. The rough idea of the interpolation will be described after the proof of the case $\theta=j/k$. For details we refer to the recent paper \cite{MRS}. The result proven in Section \ref{prof}, after a short historical background given in Section \ref{histor}, is therefore the following special case of Theorem \ref{MAINR}:

\begin{Theorem}\label{MAINRproven}
If $1\le q\le \infty$, $1\le r< \infty$, $j,k\in \mathbb{N}$, $j<k$, and
\begin{equation}\label{GNGEproven}
\frac{1}{p}=\frac{j}{kr}+\frac{k-j}{kq}\, ,
\end{equation}
then there exists constant $C$ independent of $u$ such that
\begin{equation}\label{GGNIIproven}
\|\nabla^{j}u\|_{p}\leq C\|\nabla^{k}u\|_{r}^{j/k}\|u\|_{q}^{1-j/k}\qquad\forall u\in L^q(\mathbb{R}^n)\cap W^{k,r}(\mathbb{R}^n)\, .
\end{equation}
\end{Theorem}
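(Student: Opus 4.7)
My plan follows Nirenberg's architecture: reduce the $n$-dimensional mixed-order statement to a one-dimensional second-order estimate, lift to $\mathbb{R}^n$ by slicing, and then iterate in the order of differentiation.

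\textbf{Step 1: one-dimensional second-order inequality.} I would first establish, for $u\in C_c^2(\mathbb{R})$, $1\le q,r\le\infty$, and $1/p = 1/(2r)+1/(2q)$, that
\begin{equation*}
\|u'\|_{L^p(\mathbb{R})} \le C\,\|u''\|_{L^r(\mathbb{R})}^{1/2}\,\|u\|_{L^q(\mathbb{R})}^{1/2}.
\end{equation*}
The starting point is a Taylor identity with integral remainder: for every $x\in\mathbb{R}$ and every $h>0$,
\begin{equation*}
2h\,u'(x) = \bigl(u(x+h)-u(x-h)\bigr) - \int_{x-h}^{x+h} K(x,t)\,u''(t)\,dt,
\end{equation*}
where the kernel satisfies $|K(x,t)|\le h$. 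This yields the pointwise bound
\begin{equation*}
|u'(x)| \le \frac{|u(x+h)|+|u(x-h)|}{2h} + \frac{1}{2}\int_{x-h}^{x+h}|u''(t)|\,dt.
\end{equation*}
For each $x$ I would choose a value $h=h(x)>0$ balancing the two summands (with a H\"older estimate on the second term in $L^r$); raising to the $p$-th power and integrating over $x$ — using the scaling $1/p=1/(2q)+1/(2r)$ — delivers the inequality.

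\textbf{Step 2: lifting to $n$ dimensions in the case $j=1$, $k=2$.} For a fixed coordinate $i$ and fixed values of the remaining variables $\hat{x}_i$, I would apply Step~1 to the one-variable function $t\mapsto u(\hat{x}_i,t)$, then raise to the $p$-th power and integrate over $\hat{x}_i$, splitting the product of $L^r$- and $L^q$-integrals by H\"older with exponents $2r/p$ and $2q/p$ (conjugate because $p/(2r)+p/(2q)=1$). This yields $\|\partial_i u\|_p \le C\,\|\partial_i^2 u\|_r^{1/2}\|u\|_q^{1/2}$; summing over $i$ and comparing $|\partial_i^2 u|$ to $|\nabla^2 u|$ produces the full gradient version.

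\textbf{Step 3: iteration for general $j<k$.} Define intermediate exponents by $1/p_j := j/(kr)+(k-j)/(kq)$, so that $1/p_j$ is affine in $j$ with $p_0=q$, $p_k=r$, and in particular $2/p_j = 1/p_{j-1}+1/p_{j+1}$. Applying Step~2 to $\nabla^{j-1}u$ gives, for every $1\le j\le k-1$,
\begin{equation*}
\|\nabla^j u\|_{p_j} \le C\,\|\nabla^{j+1}u\|_{p_{j+1}}^{1/2}\,\|\nabla^{j-1}u\|_{p_{j-1}}^{1/2}.
\end{equation*}
Setting $a_j:=\log\|\nabla^j u\|_{p_j}$, this reads $a_{j-1}-2a_j+a_{j+1}\ge -2\log C$, so the sequence $(a_j)$ is convex up to an additive constant, and hence $a_j\le(1-j/k)a_0+(j/k)a_k+C'$, which is exactly \eqref{GGNIIproven}. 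The analytical content concentrates in Step~1, where the choice of $h(x)$ and the endpoint cases $r=1$ or $q=\infty$ require care; Steps~2 and~3 are essentially Fubini, H\"older, and discrete convexity. The working assumption $u\in C_c^2$ is finally removed by mollification and a standard density argument in $L^q\cap W^{k,r}(\mathbb{R}^n)$.
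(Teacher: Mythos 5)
Your architecture is the right one, and Steps~2 and~3 are sound: Step~2 is essentially the paper's Lemma~\ref{GN12} (slicing, Fubini, and H\"older with the conjugate exponents $2r/p$ and $2q/p$), while your Step~3 --- the discrete logarithmic-convexity iteration on $a_j=\log\|\nabla^j u\|_{p_j}$ --- is a clean alternative to the paper's two-stage induction ($GN(k,1)\Rightarrow GN(k+1,1)$, then $GN(k,j)\Rightarrow GN(k+1,j+1)$); it closes correctly once you absorb the constant by passing to $a_j+j^2\log C$ before invoking convexity.

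The genuine gap is in Step~1, exactly where you concede the analytic content lies, and it is more than a matter of ``care''. Two things go wrong as written. First, after choosing $h=h(x)$, the term $\bigl(|u(x+h(x))|+|u(x-h(x))|\bigr)/(2h(x))$ contains pointwise values of $u$ at variably displaced points; since $x\mapsto x\pm h(x)$ is neither injective nor measure-controlled, $\int|u(x\pm h(x))|^{\alpha}\,\textup{d}x$ cannot be compared with $\|u\|_q$. You must first convert these pointwise values into local $L^q$-averages, e.g.\ by averaging your Taylor identity over $h$ in a dyadic range $[H,2H]$; this yields the two-term local estimate $|u'(x)|\lesssim H^{1-\frac{1}{r}}\|u''\|_{r,I}+H^{-1-\frac{1}{q}}\|u\|_{q,I}$ on $I=(x-2H,x+2H)$, which is precisely the role of the paper's Lemma~\ref{Klic} (proved there by subtracting the affine part and applying Morrey's estimate twice). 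Second, even with that fix, after balancing $H=H(x)$ you are left, for each $x$, with a product $\|u''\|_{r,I_x}^{p/2}\|u\|_{q,I_x}^{p/2}$ over heavily overlapping intervals, and ``integrating over $x$'' does not telescope into $\|u''\|_{r}^{p/2}\|u\|_{q}^{p/2}$: one must extract a countable subfamily with bounded overlap (the paper's Lemma~\ref{primo}, a one-dimensional Besicovitch-type selection with overlap at most $4$) and then apply the discrete H\"older inequality with exponents $\frac{q+r}{q}$ and $\frac{q+r}{r}$ to the resulting sums, as in Lemma~\ref{K}. The tempting shortcut via maximal functions, $|u'|\lesssim (Mu'')^{1/2}(Mu)^{1/2}$, would avoid the covering but fails at the endpoint $r=1$, which your statement must include. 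Until this averaging-plus-covering mechanism is supplied, Step~1 is a plan rather than a proof.
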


\section{A short historical background}\label{histor}

The International Congress of Mathematicians was held in Edinburgh on 14-21 August, 1958, attended by 1658 full members and 757 associate members, the largest total number for any International Congress of Mathematicians until that year. Besides the 19 one-hour lectures and 37 half-hour lectures, among the 604 fifteen-minutes short communications there were that one by E. Gagliardo (in French language), entitled \sl Propri\'et\'es de certaines classes de fonctions de $n$ variables, \rm and that one by L. Nirenberg, entitled \sl Inequalities for derivatives. \rm Both speakers went to Edinburgh to present to the mathematical community their results, before presenting them to some journals. Both of them, while discussing in Edinburgh before their respective communications, discovered with great surprise to present extremely similar estimates for intermediate derivatives. This was unfortunately affecting the novelty of their announcements. In a friendly atmosphere, both of them decided to mention in their communications the coincidence of the results and the discovery made in the Congress. The respective citations appeared also in their papers. E. Gagliardo just added an appendix in his paper \cite{Gag2} (and made the submission on November 8, 1958), where he put one of his inequalities (see \eqref{ga1} below) in the (\sl more elegant, \rm E. Gagliardo writes) form \eqref{GGNII} stated by L. Nirenberg (to be precise, from inequality \eqref{ga1} he got \eqref{GGNII} in the case $\theta=j/k$). Even tough E. Gagliardo mentioned the interpolation argument to get the result for all the range of $\theta$'s, it must be noticed that the full range of parameters in \eqref{paramet} does not appear, while L. Nirenberg inserted the inequality in one of his Conferences, given in the framework of a course C.I.M.E. (Centro Internazionale Matematico Estivo) held in Pisa from 1 to 10 September 1958, subsequently published (see \cite{Nire1}) in 1959.

The original Gagliardo's inequality (Theorem 7.I in \cite{Gag2}) is not a norm inequality, but a modular-type inequality. In our notation (here and in the following we change the notation for the indices according to those considered in the statement of Theorem \ref{MAINR}), Gagliardo, for $u\in L^q(\Omega)\cap W^{k,r}(\Omega)$, $\Omega\subset \mathbb{R}^n$ bounded open set with the cone property, proved that
\begin{equation}\label{ga1}
\|\nabla^{j}u\|_{p}^p\leq C\left(\|\nabla^{k}u\|_{r}^{r}+\|u\|_{q}^q+1\right)\, \quad 0<j<k\, , 1\le r\le q\, , p=\frac{kqr}{jq+(k-j)r}\, , r\ge 2
\end{equation}
considering first (see Theorem 6.I in \cite{Gag2}) the case $k=2$ and two possibilities for $r$: $r=1$ or $r>1$, the latter case with the restriction $q\ge r/(r-1)$. The value of $p$, here, is the same as in \eqref{GNGEproven} setting $k=2$, $j=1$.

The proof by E. Gagliardo for general $r$ is reduced in few lines to the case $r=2$ and, similarly as we will do in this paper, the technicalities due to the assumption that $\Omega$ has the cone property were avoided in few lines. We, however, decided to state the result directly for $\Omega=\mathbb{R}^n$. In his paper, E. Gagliardo wrote that \sl by approximation one can make the proof just for $C^\infty$ functions \rm and that \sl because of a property proved in \cite{Gag1} \rm (a paper by himself, published the year before), \sl it suffices to make the proof for domains $\Omega$ which are a union of cubes with side-length $l$, with sides parallel to the coordinate axes, such that the intersection of $\Omega$ with straight lines parallel to the coordinate axes is exactly one segment having length between $l$ and $2l$. \rm Again in few lines (this reduction is now reasonable, however, the easy integrations over lines are among the few steps described in L. Nirenberg's paper \cite{Nire1}), the proof was reduced to the following one dimensional inequality:
\begin{equation}\label{gaonedim}
\int_a^{a+\lambda}|u'|^{p}dx\le c\left(\lambda^{-p}\int_a^{a+\lambda}|u|^{q}dx+
\lambda^{2r-p}\int_a^{a+\lambda}|u''|^{r}dx+\lambda^{1-p}
\right)
\end{equation} whose proof was divided into two cases, $r>1$ and $r=1$. Even if the paper by E. Gagliardo treats formally only the case of bounded domains with the cone property (even in the comparison with the L. Nirenberg's result), it must be noticed that from inequality \eqref{ga1} one can get the statement of Theorem \ref{MAINR}. L. Nirenberg wrote in \cite{Nire2} (see the end of the first section of the paper, p. 734), the statement for the whole space $\mathbb{R}^n$ comes from the analogous one for a cube.

We conclude this section by telling that the rigorous details behind the technicalities hidden in arguments involving domains with the cone property are frequently missing. It is worth to mention, in this order of ideas, the book \cite{RFio} by R. Fiorenza (the authors thank him for discussions on the historical background above, being himself in the same Congress, speaker of a short communication), which is a strong revision of certain original notes by C. Miranda contained in \cite{MirIta}.

\section{The proof}\label{prof}

Let $M\subset \mathbb{R}^n$ be a set of positive, finite Lebesgue measure. Let $1\le p<\infty$ and let $u$ be Lebesgue measurable function defined on $M$. We shall use the symbol $\|\cdot\|_{p,M}$ to denote the norm in the space $L^{p}(M)$, defined through
$$
\|u\|_{p,M}=\left(\int_{M}|u|^{p}\right)^{\frac{1}{p}}.
$$
If the set $M$ is omitted in the symbol of the norm, we mean that the underlying set is the whole space ($\mathbb{R}$ or $\mathbb{R}^n$, no confusion will arise). The norm in the case $p=\infty$ means the essential supremum over $M$.

\subsection{Auxiliary results}

We begin with the $L^p$ version of a simple, known, observation (see e.g. \cite{Tor}, Prop. 1.1 p. 200), which in fact holds true in any Banach function space.
\begin{Lemma}\label{IMVL}
If $1\le p\le \infty$ and $M\subset \mathbb{R}^n$ is a set of positive, finite Lebesgue measure, then
$$
\|u-u_{M}\|_{p,M}\leq 2\displaystyle{\inf_{c\in\mathbb{R}}}\|u-c\|_{p,M}.
$$
where
$$
u_{M}:=\frac{1}{|M|}\int_{M}u(x)\textup{d}x;
$$
\end{Lemma}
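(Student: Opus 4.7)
The plan is to reduce the claim to the elementary fact that averaging on $M$ is a contraction in $L^{p}(M)$, and then to use the triangle inequality. Specifically, for any constant $c\in\mathbb{R}$, I would write
$$\|u-u_{M}\|_{p,M}\le \|u-c\|_{p,M}+\|c-u_{M}\|_{p,M},$$
so the entire task reduces to controlling the second summand on the right by $\|u-c\|_{p,M}$. Taking the infimum over $c$ then yields the factor $2$ in the statement.

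To estimate $\|c-u_{M}\|_{p,M}$, I would exploit that $c-u_{M}$ is a constant on $M$, so $\|c-u_{M}\|_{p,M}=|c-u_{M}|\,|M|^{1/p}$ (with the obvious interpretation when $p=\infty$). Since the average of the constant $c$ is $c$ itself, linearity of the integral gives $c-u_{M}=(c-u)_{M}$, and therefore
$$|c-u_{M}|=\left|\frac{1}{|M|}\int_{M}(c-u)\,\de x\right|\le \frac{1}{|M|}\int_{M}|u-c|\,\de x.$$
For $1\le p<\infty$, I would then apply H\"older's inequality with the pair $(p,p')$ to bound $\int_{M}|u-c|\cdot 1\,\de x\le \|u-c\|_{p,M}\,|M|^{1/p'}$, which after multiplication by $|M|^{1/p-1}$ yields $|c-u_{M}|\le \|u-c\|_{p,M}\,|M|^{-1/p}$; multiplying by $|M|^{1/p}$ gives the desired bound $\|c-u_{M}\|_{p,M}\le \|u-c\|_{p,M}$. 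The case $p=\infty$ is even simpler, since $\frac{1}{|M|}\int_{M}|u-c|\,\de x\le \ess_{M}|u-c|=\|u-c\|_{\infty,M}$ directly.

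Putting the two bounds together and optimizing over $c$ produces the claimed inequality. I do not expect any real obstacle here: the only mildly delicate point is handling $p=1$ and $p=\infty$ in the H\"older step, which is resolved by the unified observation that averaging is a contraction in every $L^{p}(M)$, $1\le p\le\infty$. No approximation, density or regularity argument is needed, because the computation uses only $u\in L^{p}(M)$ (which, since $|M|<\infty$, guarantees $u\in L^{1}(M)$ so that $u_{M}$ is well defined).
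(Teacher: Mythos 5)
Your proposal is correct and follows essentially the same route as the paper's proof: triangle inequality, observe that $u_M-c$ is the average of $u-c$ and hence a constant, and control it via H\"older's inequality by $\|u-c\|_{p,M}$ before taking the infimum over $c$. Your explicit treatment of the case $p=\infty$ is a small bonus, since the paper's displayed computation is written only for finite $p$.
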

\begin{proof}
By H\"older inequality we get
$$
\begin{aligned}
\|u-u_{M}\|_{p,M}&\leq\|u-c\|_{p,M}+\|u_{M}-c\|_{p,M}\\
&=\|u-c\|_{p,M}+\left(\int_M\left(\frac{1}{|M|}\int_M u(x)\textup{d}x-\frac{1}{|M|}\int_M c\,\textup{d}x\right)^p\right)^{\frac{1}{p}}\\
&\leq\|u-c\|_{p,M}+\frac{1}{|M|}\|u-c\|_{1,M}|M|^{1/p}\\
&\leq\|u-c\|_{p,M}+\frac{1}{|M|^{1/p}}\|u-c\|_{p,M}|M|^{1/p}=2\|u-c\|_{p,M}\, ,
\end{aligned}
$$
and the proof is completed taking the infimum over all $c$.
\end{proof}

\begin{Lemma}\label{Klic}
Let $p,q,r\geq 1$,  let $I\subset\mathbb{R}$ be an open interval of length $l$. Then the inequality
$$
\| u'\|_{p,I}\leq C\left(l^{1+\frac{1}{p}-\frac{1}{r}}\|u''\|_{r,I}+l^{-1+\frac{1}{p}-\frac{1}{q}}\|u\|_{q,I}\right)
$$
holds for any function $u\in W^{2,1}(I)$.
\end{Lemma}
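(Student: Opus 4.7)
The plan is to reduce to the unit interval by scaling, then locate an explicit point $\xi\in I$ where $u'(\xi)$ is controlled by $\|u\|_q$ alone, and finally integrate $u''$ from $\xi$ to obtain a pointwise bound on $u'$.

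\textbf{Step 1 (Scaling).} Write $I=(a,a+l)$ and set $v(y):=u(a+ly)$ for $y\in(0,1)$. A direct computation gives $\|v\|_{q,(0,1)}=l^{-1/q}\|u\|_{q,I}$, $\|v'\|_{p,(0,1)}=l^{1-1/p}\|u'\|_{p,I}$ and $\|v''\|_{r,(0,1)}=l^{2-1/r}\|u''\|_{r,I}$. Substituting into the unit-interval inequality
$$\|v'\|_{p,(0,1)} \le C\bigl(\|v''\|_{r,(0,1)}+\|v\|_{q,(0,1)}\bigr)$$
and rearranging produces exactly the claimed estimate with $l$-dependent weights. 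Hence it suffices to prove the unit-interval case.

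\textbf{Step 2 (Locating a small value of $u'$).} Since in dimension one $W^{2,1}(0,1)\hookrightarrow C^{1}([0,1])$, both $u$ and $u'$ have continuous representatives, so all the mean-value arguments below are legitimate. By the mean-value theorem for integrals applied on $[0,1/3]$ and $[2/3,1]$ there exist $\eta\in[0,1/3]$ and $\zeta\in[2/3,1]$ such that $u(\eta)=3\int_0^{1/3}u$ and $u(\zeta)=3\int_{2/3}^{1}u$. By H\"older's inequality $|u(\eta)|,|u(\zeta)|\le 3^{1/q}\|u\|_{q,(0,1)}$. Lagrange's mean-value theorem then yields $\xi\in(\eta,\zeta)$ with $u'(\xi)=(u(\zeta)-u(\eta))/(\zeta-\eta)$, and since $\zeta-\eta\ge 1/3$ we conclude
$$|u'(\xi)|\le C_{1}\|u\|_{q,(0,1)}.$$

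\textbf{Step 3 (Pointwise bound and conclusion).} Since $u'$ is absolutely continuous on $[0,1]$, for every $x\in(0,1)$
$$u'(x)=u'(\xi)+\int_{\xi}^{x}u''(t)\,dt,$$
so $|u'(x)|\le |u'(\xi)|+\|u''\|_{1,(0,1)}\le C_{1}\|u\|_{q,(0,1)}+\|u''\|_{r,(0,1)}$, where the last step is H\"older's inequality together with $|I|=1$. Taking the $L^{p}(0,1)$-norm of both sides (using $\|\cdot\|_{p,(0,1)}\le\|\cdot\|_{\infty,(0,1)}$) establishes the unit-interval estimate, and Step~1 finishes the proof for arbitrary~$l$.

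The argument is essentially routine; the only point requiring care is the bookkeeping of scaling exponents in Step~1, which is what forces the precise powers of $l$ appearing in the statement. Verifying that all mean-value theorems apply reduces, in the end, to the embedding $W^{2,1}(0,1)\hookrightarrow C^{1}([0,1])$, valid precisely because we are in one spatial dimension.
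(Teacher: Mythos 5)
Your proof is correct, but it follows a genuinely different route from the paper's. The paper works directly on the interval of length $l$: it normalizes $u$ to have zero mean, subtracts the linear function $d(x-x_0)$ so that the corrected function $\tilde u$ satisfies $\int_I\tilde u=\int_I\tilde u'=0$, and then applies the pointwise Poincar\'e--Morrey estimate $|v(x)-v_I|\le\|v'\|_{r,I}|I|^{(r-1)/r}$ twice (once to $\tilde u'$, once to $\tilde u$) to control both $\|\tilde u'\|_{p,I}$ and the discarded linear term, tracking the powers of $l$ throughout. You instead rescale to the unit interval first, which isolates all the exponent bookkeeping in one clean computation, and then prove the $l=1$ case by an elementary pointwise argument: two integral mean values plus Lagrange's theorem produce a point $\xi$ with $|u'(\xi)|\lesssim\|u\|_{q}$, and integrating $u''$ from $\xi$ gives a sup-norm bound on $u'$, which dominates every $L^p$ norm on a set of measure one. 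Your scaling identities and the separation $\zeta-\eta\ge 1/3$ are verified correctly, and the embedding $W^{2,1}(0,1)\hookrightarrow C^1([0,1])$ legitimizes the mean-value arguments. What your approach buys is economy: it avoids Lemma \ref{IMVL} and the affine correction entirely, and the constant's independence of $l$ is automatic from the scaling. What the paper's approach buys is that the same mean-zero/Morrey machinery is reused verbatim in the covering and induction arguments that follow (Lemmas \ref{primo}--\ref{GN12}), so the affine-subtraction template is not wasted effort there.
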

\begin{proof}
By the previous lemma we have
$$
\|u-u_{I}\|_{q,I}\approx\displaystyle{\inf_{c\in\mathbb{R}}}\|u-c\|_{q,I},
$$
hence, we may assume that
$$
\int_{I}u(x)\textup{d}x=0.
$$
Denote by
$$
d:=\frac{1}{l}\int_{I}u'(x)\textup{d}x.
$$
Now we denote by $x_{0}$ the center of $I$ and we set
\begin{equation}\label{utilde}
\tilde{u}(x):=u(x)-d\left(x-x_{0}\right)\, , \quad x\in I\, ,
\end{equation}
so that
\begin{equation}\label{meanuprime}
\int_{I}\tilde{u}(x)\textup{d}x=
\int_{I} \tilde{u}'(x)\textup{d}x=0.
\end{equation}
Since, in general,
\begin{equation}\label{MOR}
|v(x)-v_{I}|\leq \|v'\|_{r,I}|I|^{\frac{r-1}{r}}\qquad \forall
v\in W^{1,r}(I)\, ,\,\, r\ge 1\, ,
\end{equation}
from \eqref{meanuprime} we have $\tilde{u}'_I=0$ and therefore
\begin{equation}\label{uprim}
\begin{aligned}
\|\tilde{u}'\|_{p,I}=\|\tilde{u}'-\tilde{u}'_I\|_{p,I}
\leq \|\|\tilde{u}''\|_{r,I}|I|^{\frac{r-1}{r}}\|_{p,I}=l^{\frac{r-1}{r}+\frac{1}{p}}\|u''\|_{r,I}\, ,
\end{aligned}
\end{equation}
and therefore, using again \eqref{meanuprime} and the estimate in \eqref{MOR}, we obtain
\begin{equation}\label{secondest}
\|\tilde{u}\|_{q,I}=\|\tilde{u}-\tilde{u}_I\|_{q,I}
\leq \|\|\tilde{u}'\|_{p,I}|I|^{\frac{p-1}{p}}\|_{q,I}
\leq \|l^{\frac{r-1}{r}+\frac{1}{p}}\|u''\|_{r,I}l^{\frac{p-1}{p}}\|_{q,I}
= l^{1+\frac{1}{q}+\frac{r-1}{r}}\|u''\|_{r,I}\, .
\end{equation}
Now, using in turn \eqref{utilde}, \eqref{uprim}, \eqref{utilde} again, and finally \eqref{secondest},
$$
\begin{aligned}
\|u'\|_{p,I}&\leq \|\tilde{u}'\|_{p,I}+\|d\|_{p,I}\\
&=\|\tilde{u}'\|_{p,I}+\frac{\|1\|_{p,I}}{\|x-x_{0}\|_{q,I}}\|d(x-x_{0})\|_{q,I}\\
&\lesssim l^{1+\frac{1}{p}-\frac{1}{r}}\|u''\|_{r,I}+l^{-1+\frac{1}{p}-\frac{1}{q}}\|d(x-x_{0})\|_{q,I}\\
&\lesssim l^{1+\frac{1}{p}-\frac{1}{r}}\|u''\|_{r,I}+l^{-1+\frac{1}{p}-\frac{1}{q}}(\|u\|_{q,I}+\|\tilde{u}\|_{q,I})\\
&\lesssim l^{1+\frac{1}{p}-\frac{1}{r}}\|u''\|_{r,I}+l^{-1+\frac{1}{p}-\frac{1}{q}}\|u\|_{q,I},
\end{aligned}
$$
which is the estimate from the statement.
\end{proof}

It is our goal to prove Theorem \ref{MAINRproven} in the assumption \eqref{GNGEproven}, however,
in next three lemmata, we will examine a special case and we will assume, even if not stated explicitly, that
$1\leq q\leq \infty$, $1\leq r< \infty$, and $1\leq p< \infty$ is such that
\begin{equation}\label{speciallink}
\frac{2}{p}=\frac{1}{r}+\frac{1}{q}\, .
\end{equation}

Even if it is not of interest for our goals, we notice that next Lemma \ref{primo} holds also for a wider range of exponents, not necessarily linked by \eqref{speciallink}.

\begin{Lemma}\label{primo}
Let $u\in\mathcal{C}^{\infty}_{c}(\mathbb{R})$. There exists a sequence of open intervals $(I_{k})$, which covers the (compact) support of $u$, such that
\begin{enumerate}[\upshape(i)]
\item $
l_{k}^{1+\frac{1}{p}-\frac{1}{r}}\|u''\|_{r,I_{k}}=l_{k}^{-1+\frac{1}{p}-\frac{1}{q}}\|u\|_{q,I_{k}};
$
\vskip.4cm\item
$
\qquad\qquad\qquad\qquad\qquad\qquad\qquad\displaystyle{\sum_{k}\chi_{I_{k}}}\leq 4,
$
\end{enumerate}

\noindent
where $l_{k}$ stands for length of $I_{k}$.
\end{Lemma}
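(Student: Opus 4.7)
The plan is to construct, for each $x_0\in\operatorname{supp}(u)$, at least one open interval $I_{x_0}$ containing $x_0$ on which the balancing identity (i) holds, and then to thin out the resulting family into a subcover of $\operatorname{supp}(u)$ with bounded overlap.

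For the first step, fix $x_0\in\operatorname{supp}(u)$ and consider the centered intervals $I(l)=(x_0-l/2,\,x_0+l/2)$ for $l>0$. Multiplying by $l^{1-1/p+1/q}$, condition (i) is equivalent to the vanishing of
\[
\Psi(l):=l^{\,2+\frac{1}{q}-\frac{1}{r}}\,\|u''\|_{r,I(l)}-\|u\|_{q,I(l)}.
\]
Both $l\mapsto\|u\|_{q,I(l)}$ and $l\mapsto\|u''\|_{r,I(l)}$ are continuous on $(0,\infty)$ by dominated convergence, so $\Psi$ is continuous. Since $q,r\ge 1$, the exponent $2+1/q-1/r$ is strictly positive. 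When $l$ is so large that $I(l)\supset\operatorname{supp}(u)$, the two norms freeze at $\|u''\|_r$ and $\|u\|_q$, so $\Psi(l)\to+\infty$. Conversely, Taylor expanding the $C^{\infty}$ function $u$ at $x_0$ yields $\|u''\|_{r,I(l)}=O(l^{1/r})$ and $\|u\|_{q,I(l)}\gtrsim |u(x_0)|\,l^{1/q}$ whenever $u(x_0)\neq 0$, which makes $\Psi(l)\sim l^{2+1/q}-|u(x_0)|\,l^{1/q}<0$ for small $l$. The intermediate value theorem then provides the desired $l_{x_0}>0$. At boundary points of $\operatorname{supp}(u)$ where $u(x_0)=0$, a further Taylor analysis shows that either $\Psi$ is still strictly negative for small $l$ (so the IVT applies), or it already vanishes on a neighbourhood of $0$, in which case any small $l_{x_0}$ is admissible.

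For the second step, the family $\mathcal{F}=\{I_{x_0}:x_0\in\operatorname{supp}(u)\}$ is an open cover of the compact set $\operatorname{supp}(u)$. Applying the one-dimensional Besicovitch covering theorem, one extracts a countable subfamily $(I_k)\subset\mathcal{F}$ still covering $\operatorname{supp}(u)$ with bounded overlap; in dimension one, a direct argument on the ordered endpoints of the chosen intervals gives overlap at most $2$, so in particular $\sum_k\chi_{I_k}\le 4$, which is (ii). Since (i) was imposed pointwise on every member of $\mathcal{F}$, it is preserved under the combinatorial selection.

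The main obstacle is reconciling the two requirements: (i) rigidly pins down the length of any admissible interval centered at a given point, so one cannot shrink or enlarge a chosen $I_{x_0}$ to tidy up the overlaps after the fact. The argument must therefore first assemble the \emph{complete} admissible family $\mathcal{F}$ of length-constrained intervals, and only afterwards run a purely combinatorial selection on $\mathcal{F}$. A secondary technical nuisance is the small-$l$ behaviour of $\Psi$ at points where $u$ or its derivatives vanish, which is handled by the refined Taylor expansion indicated above.
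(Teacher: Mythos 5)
Your overall architecture is the same as the paper's: for each point of the support, produce a ``balanced'' interval centred there by a continuity/intermediate-value argument, then extract a countable subfamily with bounded overlap (Besicovitch, or the elementary one-dimensional selection). The IVT step is correct at every point where $u(x_0)\neq 0$, and the selection step is fine modulo one omission: any bounded-overlap extraction needs the lengths $l_{x_0}$ to be uniformly bounded. This is true --- once $l$ exceeds $\max\bigl\{2\diam(\supt(u)),\,(\|u\|_q/\|u''\|_r)^{1/(2+1/q-1/r)}\bigr\}$ both norms freeze and $\Psi(l)>0$, so every root of $\Psi$ lies below this threshold --- but you never say so, whereas the paper proves exactly this uniform bound (its quantity $M$) before invoking the covering theorem.

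The genuine gap is your treatment of points with $u(x_0)=0$. The claimed dichotomy (``either $\Psi<0$ for small $l$, or $\Psi\equiv 0$ near $0$'') is false, and the conclusion you need actually fails there. If $x_0$ is a boundary point of $\supt(u)$, then $u(x_0)=u'(x_0)=0$, so Taylor's formula with integral remainder gives, for \emph{every} interval $J\ni x_0$ of length $l$ with $u''\not\equiv 0$ on $J$,
$$
\|u\|_{q,J}\;\le\; l^{1/q}\sup_J|u|\;\le\; l^{1/q}\cdot l\,\|u''\|_{1,J}\;\le\; l^{2+\frac1q-\frac1r}\|u''\|_{r,J},
$$
and the inequality is strict. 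Hence $\Psi(l)>0$ for all $l>0$ (not only for small $l$): there is no balanced interval centred at $x_0$, and in fact no open interval containing $x_0$ can satisfy (i) at all (take, e.g., a standard bump and $x_0$ its left endpoint). So your construction has no first interval precisely where the cover of $\supt(u)$ needs one, and the statement as literally written cannot be rescued by a ``further Taylor analysis''. To be fair, the paper's own proof has the same blind spot: its assertion $\alpha_x(0+)=+\infty$ is only valid when $u(x)\neq 0$. The standard repair --- and all that Lemma \ref{K} actually uses --- is to take centres only in the open set $\{u\neq 0\}$, where your IVT argument is sound, and to observe that the resulting intervals cover $\{u'\neq 0\}$ up to a countable set (since $u'=0$ at every non-isolated zero of $u$), weakening the covering claim accordingly.
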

\begin{proof}
Let us consider a non-zero function $u$. Without loss of generality we may suppose that $u(0)\neq 0$. Given a point $x$ in the support of $u$, define functions $\alpha_{x}$ and $\omega_{x}$ by
$$
\omega_{x}(h):=h^{1+\frac{1}{p}-\frac{1}{r}}\|u''\|_{r,(x-\frac{h}{2},x+\frac{h}{2})}
$$
and
$$
\alpha_{x}(h):=h^{-1+\frac{1}{p}-\frac{1}{q}}\|u\|_{q,(x-\frac{h}{2},x+\frac{h}{2})}.
$$
Note that both functions are continuous and $\omega_{x}(+\infty)=\alpha_{x}(0+)=\infty$, while\\ $\omega_{x}(0+)~=~\alpha_{x}(+\infty)=0$. Set
$$
r_{x}:=\sup \{h>0: \omega_{x}(h)\leq\alpha_{x}(h)\}.
$$
Denoting
$$
M:=2\max\{r_{0},\textup{diam}(\textup{supt}(u))\}\, ,
$$
for any $x\in\textup{supt}(u)$ one has $r_{x}\leq M$: in fact, by contradiction, assume temporarily
that $r_{x}> M$. We would have the existence of $h_M>M$ such that $\omega_{x}(h_M)\leq\alpha_{x}(h_M)$. But, since $h_M>M\ge 2\textup{diam}(\textup{supt}(u))$, we would get $\omega_{x}(h_M)=\omega_{0}(h_M)$, $\alpha_{x}(h_M)=\alpha_{0}(h_M)$. This implies $\omega_{0}(h_M)\leq\alpha_{0}(h_M)$, hence $h_M\le r_0<M$, against $h_M>M$, which is a contradiction.

Let us consider covering by open intervals
$$
\{I_{x}\}_{x\in \textup{supt}(u)}=(x-r_{x},x+r_{x}).
$$
Now, since $r_{x}$ are uniformly bounded, we may choose subcovering with the property (ii) (one can use Besicovitch theorem, or prove this directly in dimension one, which is a simple exercise).
\end{proof}

The following lemma should be compared with inequality (2.6) in \cite{Nire1}, proved by L.~Nirenberg using a covering argument which in fact has been formalized in the previous Lemma \ref{primo}.

\begin{Lemma}\label{K}
Let $u\in\mathcal{C}^{2}(\mathbb{R})\cap L^{q}(\mathbb{R})$ be such that $u''\in L^{r}(\mathbb{R})$. Then $u'\in L^{p}(\mathbb{R})$ and there exists a constant $C$ independent of $u$ such that
\begin{equation}\label{unodimens}
\|u'\|_{p}^{2}\leq C \|u''\|_{r}\|u\|_{q}.
\end{equation}
\end{Lemma}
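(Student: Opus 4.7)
The plan is to reduce to the case $u\in\mathcal{C}_c^\infty(\mathbb{R})$ and then combine Lemma~\ref{Klic} with a H\"older summation over the covering produced by Lemma~\ref{primo}.

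First I would dispose of the approximation step. Starting from $u\in\mathcal{C}^2(\mathbb{R})\cap L^q(\mathbb{R})$ with $u''\in L^r(\mathbb{R})$, one multiplies by a family of smooth cut-offs $\varphi_R$ supported on $[-2R,2R]$ with $\|\varphi_R'\|_\infty\le C/R$ and $\|\varphi_R''\|_\infty\le C/R^2$, and then mollifies. The Leibniz remainder terms vanish in the relevant norms as $R\to\infty$, so the inequality for the resulting $\mathcal{C}_c^\infty$ approximants transfers to $u$ itself; this step should be routine.

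Once restricted to $u\in\mathcal{C}_c^\infty(\mathbb{R})$, I apply Lemma~\ref{primo} to obtain a covering $(I_k)$ of $\supt(u)$ of lengths $l_k$, and on each $I_k$ invoke Lemma~\ref{Klic} to get
$$
\|u'\|_{p,I_k}\le C\bigl(l_k^{1+\frac{1}{p}-\frac{1}{r}}\|u''\|_{r,I_k}+l_k^{-1+\frac{1}{p}-\frac{1}{q}}\|u\|_{q,I_k}\bigr).
$$
Property~(i) of the covering makes the two terms on the right equal, so their sum equals twice their geometric mean; the scaling relation \eqref{speciallink}, $\frac{2}{p}=\frac{1}{r}+\frac{1}{q}$, exactly cancels the resulting exponent of $l_k$ inside that geometric mean and yields
$$
\|u'\|_{p,I_k}^2\le 4C^2\,\|u''\|_{r,I_k}\,\|u\|_{q,I_k}.
$$

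The final step is summation. Raising the previous display to the power $p/2$ and summing over $k$, I apply H\"older's inequality with conjugate exponents $\frac{2r}{p}$ and $\frac{2q}{p}$, which are conjugate precisely by \eqref{speciallink}. This produces a factor of $\bigl(\sum_k\|u''\|_{r,I_k}^r\bigr)^{p/(2r)}\bigl(\sum_k\|u\|_{q,I_k}^q\bigr)^{p/(2q)}$, and the bounded overlap property~(ii) controls each of these two inner sums by $4\|u''\|_r^r$ and $4\|u\|_q^q$ respectively. Since $(I_k)$ covers $\supt(u)\supset\supt(u')$, one has $\|u'\|_p^p\le\sum_k\|u'\|_{p,I_k}^p$, so extracting a $p$-th root and then squaring delivers \eqref{unodimens}. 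I expect the main bookkeeping obstacle to be the endpoint $q=\infty$, which should be handled separately by pulling the trivial bound $\|u\|_{\infty,I_k}\le\|u\|_\infty$ outside the sum in place of the H\"older step.
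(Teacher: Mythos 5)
Your proposal is correct and takes essentially the same route as the paper: cover $\supt(u)$ with the intervals of Lemma \ref{primo}, use property (i) together with Lemma \ref{Klic} and the relation \eqref{speciallink} to cancel the powers of $l_k$ on each interval, sum with H\"older's inequality at the conjugate exponents $\tfrac{2r}{p},\tfrac{2q}{p}$, control the resulting sums by the bounded-overlap property (ii), and finally pass from $\mathcal{C}^\infty_c$ to the general case by approximation. The only (harmless) cosmetic differences are that you phrase the pointwise step as ``sum of two equal terms equals twice their geometric mean'' where the paper substitutes (i) into one factor of $\|u''\|_{r,I_k}^{p/2}$, and that your approximation step adds an explicit cut-off before mollifying, which is if anything slightly more careful than the paper's.
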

\begin{proof}
First suppose that $u\in \mathcal{C}^{\infty}_{c}$. Let $(I_{k})$ be a family of intervals satisfying properties (i) and (ii) from the previous lemma. Then the following estimates hold
$$
\begin{aligned}
\|u'\|_{p}^{p}&\leq \displaystyle{\sum_{k}} \int_{I_{k}}|u'(s)|^{p}\textup{d}s\\
&\lesssim \displaystyle{\sum_{k}}l_{k}^{p+1-\frac{p}{r}}\|u''\|_{r,I_{k}}^{p}\\
&= \displaystyle{\sum_{k}}\|u''\|_{r,I_{k}}^{\frac{p}{2}}\cdot l_{k}^{p+1-\frac{p}{r}}\|u''\|_{r,I_{k}}^{\frac{p}{2}}\\
&= \displaystyle{\sum_{k}}\|u''\|_{r,I_{k}}^{\frac{p}{2}}\cdot l_{k}^{p+1-\frac{p}{r}}(l_{k}^{-2-\frac{1}{q}+\frac{1}{r}}\|u\|_{q,I_{k}})^{\frac{p}{2}}\\
&=\displaystyle{\sum_{k}}\left(\|u''\|_{r,I_{k}}^{r}\right)^{\frac{q}{q+r}}\left(\|u\|_{q,I_{k}}^{q}\right)^{\frac{r}{q+r}}l_{k}^{p+1-\frac{p}{r}+\frac{p}{2}\left(-2-\frac{1}{q}+\frac{1}{r}\right)}\\
&=\displaystyle{\sum_{k}}\left(\|u''\|_{r,I_{k}}^{r}\right)^{\frac{q}{q+r}}\left(\|u\|_{q,I_{k}}^{q}\right)^{\frac{r}{q+r}}\\
&\leq \left(\displaystyle{\sum_{k}}\|u''\|_{r,I_{k}}^{r}\right)^{\frac{q}{q+r}}\left(\displaystyle{\sum_{k}}\|u\|_{q,I_{k}}^{q}\right)^{\frac{r}{r+q}}\\
&\lesssim \|u''\|_{r}^{\frac{p}{2}}\|u\|_{q}^{\frac{p}{2}}\, .
\end{aligned}
$$
To complete the proof define
$$
u_{\varepsilon}:=\varphi_{\varepsilon}*u
$$
where $\varphi$ is standard mollifier, and take the limit where $\varepsilon\rightarrow 0+$.
\end{proof}

\begin{Lemma}\label{GN12}
If $u\in W^{2,r}(\mathbb{R}^n)$, $1\leq p,q,r$ and
$$
\frac{2}{p}=\frac{1}{q}+\frac{1}{r},
$$
then
$
\|\nabla u\|_{p}^{2}\lesssim\|\nabla^{2} u\|_{r}\|u\|_{q}
$.
\end{Lemma}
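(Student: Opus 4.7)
The strategy is to reduce the $n$-dimensional estimate to the one-dimensional Lemma \ref{K} by slicing $\mathbb{R}^{n}$ with lines parallel to the coordinate axes and then recombining the slice estimates via Fubini and H\"older's inequality.

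The main step goes as follows. I would first bound $\|\nabla u\|_{p}$ by a constant depending only on $n$ and $p$ times $\sum_{i=1}^{n}\|\partial_{i}u\|_{p}$, and treat each direction separately. Fixing $i$, write a point of $\mathbb{R}^{n}$ as $x=(x_{i},\hat{x}_{i})$ with $\hat{x}_{i}\in\mathbb{R}^{n-1}$, and apply Lemma \ref{K} to the single-variable function $t\mapsto u(\ldots,t,\ldots)$ along the $i$-th coordinate line through $\hat{x}_{i}$. After raising to the power $p/2$, this gives, for a.e.\ $\hat{x}_{i}$,
\[
\|\partial_{i}u(\cdot,\hat{x}_{i})\|_{p,\mathbb{R}}^{p}\lesssim \|\partial_{i}^{2}u(\cdot,\hat{x}_{i})\|_{r,\mathbb{R}}^{p/2}\,\|u(\cdot,\hat{x}_{i})\|_{q,\mathbb{R}}^{p/2}.
\]
Integrating in $\hat{x}_{i}$, Fubini turns the left-hand side into $\|\partial_{i}u\|_{p}^{p}$, while on the right-hand side I would invoke H\"older's inequality with the exponents $\tfrac{2r}{p}$ and $\tfrac{2q}{p}$, which are conjugate precisely because $\tfrac{2}{p}=\tfrac{1}{r}+\tfrac{1}{q}$, and one more Fubini to obtain $\|\partial_{i}^{2}u\|_{r}^{p/2}\|u\|_{q}^{p/2}$. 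Taking $p$-th roots, squaring and using $\|\partial_{i}^{2}u\|_{r}\le \|\nabla^{2}u\|_{r}$ yields $\|\partial_{i}u\|_{p}^{2}\lesssim \|\nabla^{2}u\|_{r}\|u\|_{q}$ for every $i$, and the statement follows by summation in $i$. The case $q=\infty$ fits into the same scheme by interpreting the Hölder exponent $\tfrac{2q}{p}=\infty$ in the usual way, since $\|u(\cdot,\hat{x}_{i})\|_{\infty,\mathbb{R}}\le\|u\|_{\infty,\mathbb{R}^{n}}$ for every $\hat{x}_{i}$.

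The only genuinely delicate point, and hence the one I would check carefully, is the applicability of Lemma \ref{K} on a.e.\ slice. For this I would first replace $u\in L^{q}(\mathbb{R}^{n})\cap W^{2,r}(\mathbb{R}^{n})$ by a mollification $u_{\varepsilon}=\varphi_{\varepsilon}\ast u$: this is smooth on $\mathbb{R}^{n}$ and, by Fubini, for a.e.\ $\hat{x}_{i}$ its slice belongs to $C^{2}(\mathbb{R})\cap L^{q}(\mathbb{R})$ with second derivative in $L^{r}(\mathbb{R})$, so that Lemma \ref{K} applies slicewise and produces the desired inequality for $u_{\varepsilon}$. Passage to the limit $\varepsilon\to 0^{+}$ is then routine: $u_{\varepsilon}\to u$ in $W^{2,r}(\mathbb{R}^{n})$ and (when $q<\infty$) in $L^{q}(\mathbb{R}^{n})$, while for $q=\infty$ one uses the standard bound $\|u_{\varepsilon}\|_{\infty}\le \|u\|_{\infty}$; extracting an a.e.\ convergent subsequence of $\nabla u_{\varepsilon}$ (available since $\nabla u_{\varepsilon}\to\nabla u$ in $L^{r}$) and applying Fatou's lemma transfers the estimate from $u_{\varepsilon}$ to $u$.
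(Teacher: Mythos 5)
Your proof is correct, and it reaches the estimate by a mildly but genuinely different route than the paper. The paper argues by induction on the dimension: in the step from $n-1$ to $n$ it splits $\nabla u$ into the partial gradient $u_{x'}$ in the first $n-1$ variables and the single derivative $u_{x_n}$, estimates the first piece on the slices $\{x_n=\text{const}\}$ by the $(n-1)$-dimensional statement and the second by the one-dimensional Lemma \ref{K}, and then recombines with H\"older (exponents $2r/p$ and $2q/p$, conjugate by the hypothesis $2/p=1/r+1/q$) and Fubini, exactly as you do. Your version unrolls this induction: you bound $\|\nabla u\|_{p}$ by $\sum_{i}\|\partial_{i}u\|_{p}$ and apply the one-dimensional lemma directly on lines in each of the $n$ coordinate directions, so no induction is needed and only the pure second derivatives $\partial_{i}^{2}u$ ever enter (which both proofs ultimately control by $|\nabla^{2}u|$ pointwise). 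Since the two arguments use the same two ingredients --- Lemma \ref{K} on lines, then H\"older/Fubini in the transverse variables --- what your organization buys is economy: one pass instead of $n-1$ inductive steps, at no cost in generality. You are also more careful than the paper on one point: Lemma \ref{K} requires $C^{2}$ regularity of the slice, and the paper applies it (and the inductive hypothesis) to a $W^{2,r}$ function without comment, whereas you mollify first, check slicewise admissibility by Fubini, and pass to the limit via a.e.\ convergence and Fatou; that is the honest way to close this gap.
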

\begin{proof}
By Lemma \ref{K} we have that the statement holds in the case of $n=1$.

Before giving the second step of the induction argument, let us fix some notation we will use in this proof. For a $n$-dimensional vector $x=(x_{1},x_{2},\dots x_{n})$ we set $x'=(x_{1},x_{2},\dots,x_{n-1})$ and therefore $x=(x_{1},x_{2},\dots,x_{n-1},x_{n})=(x',x_{n})$, so that it will be natural to use the symbols
$$
u_{x'}:=\left(\frac{\partial u}{\partial x_{1}},\frac{\partial u}{\partial x_{2}},\dots \frac{\partial u}{\partial x_{n-1}}\right)
$$
and
$$
u_{x_{i}}=\frac{\partial u}{\partial x_{i}}\, .
$$
Moreover, for $\tilde{x}=(x_{i})_{i\in A}$,  $A=\{i_{1},i_{2},\dots i_{k}\}\subset \{1,2,\dots,n\}$, we denote
$$
\|u\|_{p,\tilde{x}}:=\left(\int_{\mathbb{R}}\int_{\mathbb{R}}\dots\int_{\mathbb{R}}|u(x)|^{p}\textup{d}x_{i_{1}}\dots\textup{d}x_{i_{k-1}}\textup{d}x_{i_k}\right)^{\frac{1}{p}}\, .
$$
Note that $\|u\|_{p, x'}$ is a function of $x_{n}$ and $\|u\|_{p, x_{n}}$ is function of $x'$.

Now suppose the theorem holds for dimension $n-1$.
We use the Fubini theorem, the H\"older's inequality and the inequalities $|u_{x'x'}|=|(u_{x'})_{x'}|\leq |\nabla^{2} u|$ and $|u_{x_{n}x_{n}}|\leq|\nabla^{2} u|$ which
hold pointwise, to get
$$
\begin{aligned}
\|\nabla u\|_{p}^{p}\approx& \int_{\mathbb{R}}\int_{\mathbb{R}^{n-1}}|u_{x'}|^{p}\textup{d}x'\textup{d}x_{n}+\int_{\mathbb{R}^{n-1}}\int_{\mathbb{R}}|u_{x_{n}}|^{p}\textup{d}x_{n}\textup{d}x'\\
\lesssim& \int_{\mathbb{R}}\|u_{x'x'}\|^{\frac{p}{2}}_{r,x'}\|u\|_{q,x'}^{\frac{p}{2}}\textup{d}x_{n}+\int_{\mathbb{R}^{n-1}}\|u_{x_{n}x_{n}}\|_{r,x_{n}}^{\frac{p}{2}}\|u\|_{q,x_{n}}^{\frac{p}{2}}\textup{d}x'\\
\leq& \left(\int_{\mathbb{R}}\|u_{x'x'}\|_{r,x'}^{r}\textup{d}x_{n}\right)^{\frac{p}{2r}}\left(\int_{\mathbb{R}}\|u\|_{q,x'}^{q}\textup{d}x_{n}\right)^{\frac{p}{2q}}\\
&+\left(\int_{\mathbb{R}^{n-1}}\|u_{x_{n}x_{n}}\|_{r,x_{n}}^{r}\textup{d}x'\right)^{\frac{p}{2r}}\left(\int_{\mathbb{R}^{n-1}}\|u\|_{q,x_{n}}^{q}\textup{d}x'\right)^{\frac{p}{2q}}\\
\lesssim& \|\nabla^{2}u\|_{r}^{\frac{p}{2}}\|u\|_{q}^{\frac{p}{2}}.
\end{aligned}
$$

\end{proof}

\subsection{Proof of Theorem \ref{MAINRproven}}

We shall use notation $GN(k,j)$ to express that Theorem \ref{MAINRproven} holds for upper derivative of order $k$ and lower derivative of order $j$. By the previous lemma we have $GN(2,1)$. We shall prove two induction steps.

First let us prove that $GN(k,1)$ implies $GN(k+1,1)$. Let $p,q,r$ be numbers for which
\begin{equation}\label{firstelem}
\frac{1}{p}=\frac{\frac{1}{k+1}}{r}+\frac{\frac{k}{k+1}}{q}
\end{equation}
and let $\tilde{p}$ be a number defined by
\begin{equation}\label{secondelem}
\frac{2}{p}=\frac{1}{\tilde{p}}+\frac{1}{q}.
\end{equation}
By $GN(2,1)$ we obtain
$$
\|\nabla u\|_{p}\leq C\|\nabla^{2} u\|_{\tilde{p}}^{\frac{1}{2}}\|u\|_{q}^{\frac{1}{2}}
$$
Now we apply $GN(k,1)$ on $\nabla u$ (with $q,p$ replaced by $p,\tilde{p}$ respectively, this is allowed by relations \eqref{firstelem} and  \eqref{secondelem}) and get
$$
\|\nabla^{2} u\|_{\tilde{p}}\leq C\|\nabla^{k+1} u\|_{r}^{\frac{1}{k}}\|\nabla u\|_{p}^{1-\frac{1}{k}}.
$$
Combination of last two estimates yields
$$
\|\nabla u\|_{p}\leq C\|\nabla^{k+1} u\|_{r}^{\frac{1}{k+1}}\|u\|_{q}^{\frac{k}{k+1}},
$$
which is $GN(k+1,1)$.

In the second induction step, we shall prove that $GN(k,j)$ implies $GN(k+1,j+1)$. Let $p,q,r$ be numbers satisfying
$$
\frac{1}{p}=\frac{\frac{j+1}{k+1}}{r}+\frac{\frac{k-j}{k+1}}{q}
$$
and let $\tilde{q}$ be number for which
$$
\frac{1}{p}=\frac{\frac{j}{k}}{r}+\frac{1-\frac{j}{k}}{\tilde{q}}.
$$
By $GN(k,j)$ applied on function $\nabla u$ we have
\begin{equation}\label{FRS}
\|\nabla^{j+1}u\|_{p}\leq C\|\nabla^{k+1} u\|_{r}^{\frac{j}{k}}\|\nabla u\|_{\tilde{q}}^{1-\frac{j}{k}}.
\end{equation}
Since by the previous step we may assume that $GN(j+1,1)$ holds, we have
\begin{equation}\label{SC}
\|\nabla u\|_{\tilde{q}}\leq C\|\nabla^{j+1} u\|_{p}^{\frac{1}{j+1}}\|u\|_{q}^{\frac{j}{j+1}}.
\end{equation}
Combining the estimates \eqref{FRS} and \eqref{SC} we finish the proof.

\subsection{Sharpness of \eqref{GNGE}}

We conclude showing that the relationship between parameters in \eqref{GNGE} is essential (by \sl dimensional analysis, \rm as asserted in \cite{Nire1}) for Theorem \ref{MAINRproven} to hold. Consider a non-zero function $u\in\mathcal{C}_{0}^{\infty}(\mathbb{R}^{n})$ and define a dilation operator by
$$
T_{s}u(x):=u(sx)\, .
$$
Assume that there exists a constant $C$ independent of $v$ such that
\begin{equation}\label{optimalityy}
\|\nabla^{j}v\|_{p}\leq C\|\nabla^{k}v\|_{r}^{\theta}\|v\|_{q}^{1-\theta}
\end{equation}
Setting $v=T_{s}u$, for all $s\in(0,\infty)$ we have
$$
\nabla^{j}v=s^{j}T_{s}(\nabla^{j} u), \quad \nabla^{k}v=s^{k}T_{s}(\nabla^{k} u)\, ;
$$
on the other hand, since for any $f\in L^{m}(\mathbb{R}^n)$,  $m\ge 1$, one has
$$
\|T_{s}f\|_{m}=\left(\int_{\mathbb{R}^{n}}|f(sx)|^{m}\textup{d}x\right)^{\frac{1}{m}}=s^{-\frac{n}{m}}\|f\|_{m},
$$
and inequality \eqref{optimalityy} holds if and only if
$$
s^{j-\frac{n}{p}}\|\nabla^{j} u\|_p\leq C s^{\theta(k-\frac{n}{r})}\|\nabla^{k} u\|_{r}^{\theta}s^{-(1-\theta)\frac{n}{q}}\|u\|_{q}^{1-\theta}.
$$
Such inequality can be valid for all positive $s$ only if
$$
j-\frac{n}{p}=\theta\left(k-\frac{n}{r}\right)-(1-\theta)\frac{n}{q},
$$
which is equivalent to \eqref{GNGE}.
\begin{Remark}
Let us briefly explain how the interpolation works. If $p,r\geq 1$ are given, we first use the estimate
\begin{equation}\label{INTERP}
\|u\|_{p}\leq \|u\|^{\alpha}_{r}\|u\|^{1-\alpha}_{q},
\end{equation}
where
$$
\frac{1}{p}=\frac{\alpha}{r}+\frac{1-\alpha}{q}.
$$
Now if the parameter $\alpha$ is chosen properly we obtain the product of powers of two norms: one to be estimated by the use of the Sobolev embedding theorem, and the other by Theorem \ref{MAINRproven}. The L. Nirenberg's version of the theorem admitted $p,r<0$, the appropriate $p,q$-norms being defined as H\" older spaces (for precise definition see original paper). L. Nirenberg claimed that the inequality \eqref{INTERP} holds even in this extended environment. However the proof is missing, neither a reference is given. A partial result is given by A. Kufner and A. Wannebo in \cite{Kuf}, and a proof covering all the cases is contained in a recent paper, see \cite{MRS}.
\end{Remark}
\subsection{Comparison with Nirenberg's proof}

Lemma\ref{Klic} was stated in the original paper by L. Nirenberg in the same form (see (2.7) p. 130 in \cite{Nire1}), it's proof, however was given as an exercise. The careful reader may notice different exponents in the second term of (2.7) in \cite{Nire1} and in the analogous exponent in next Lemma: it should be stressed that the statement by Nirenberg is given only for $\theta=1/2$. If in the Lemma below one replaces $q$ in terms of $r$ and $p$ in the case $\theta=1/2$ (see see (2.5) p. 129 in \cite{Nire1}), then the exponents coincide.

\begin{Remark}
Lemma \ref{Klic} is the first, key step to get Theorem \ref{MAINRproven}. However,
it may be interesting to see also how from the inequality \eqref{gaonedim} by E. Gagliardo, which can be equivalently written, in the notation of the Lemma, as follows:
\begin{equation}\label{gaonedimnew}
\| u'\|_{p,I}\leq C\left(l^{2\frac{r}{p}-1}\|u''\|_{r,I}^{\frac{r}{p}}+l^{-1}\|u\|_{q,I}^{\frac{q}{p}}+l^{\frac{1}{p}-1}\right)\, ,
\end{equation}
one can get the inequality \eqref{GGNIIproven} (that one in the statement of Theorem \ref{MAINRproven}) for functions supported in a bounded interval, in the one-dimensional case, when $j=1$, $k=2$ (compare with \eqref{unodimens} below, stated for functions defined in all $\mathbb{R}$), hence when the relation among exponents is
\begin{equation}\label{gacase}
\frac{1}{p}=\frac{1}{2r}+\frac{1}{2q}\, .
\end{equation}
We notice that E. Gagliardo in his appendix compared his inequality with that one by L. Nirenberg only in the case of bounded open sets, hence he deduced an inequality with an extra term on the right hand side with respect to \eqref{GGNIIproven}.

The first remark is that the third term in the right hand side of \eqref{gaonedimnew} can be dropped by a standard homogenizing argument (for a formalization in a quite general context see \cite{daristfio}): one applies the inequality to $u$ replaced, say, by $tu$ and then, using that the constant in the right hand side does not depend on $u$ (hence it does not depend on $t$), letting $t\to\infty$, one gets
\begin{equation}\label{gaonedimnewbis}
\| u'\|_{p,I}\leq C\left(l^{2\frac{r}{p}-1}\|u''\|_{r,I}^{\frac{r}{p}}+l^{-1}\|u\|_{q,I}^{\frac{q}{p}}\right)\, ,
\end{equation}
Writing $l=\alpha \|u''\|_{r,I}^\lambda \|u\|_{q,I}^\mu$ for some $\lambda,\mu$ to be determined later, let us consider the interesting case $\alpha\|u''\|_{r,I}>0$. The first term in the right hand side of \eqref{gaonedimnewbis} becomes
\begin{equation}\label{firstterm}
l^{2\frac{r}{p}-1}\|u''\|_{r,I}^{\frac{r}{p}}=\alpha^{2\frac{r}{p}-1}
\|u''\|_{r,I}^{\left(2\frac{r}{p}-1\right)\lambda+\frac{r}{p}}\|u\|_{q,I}^{\left(2\frac{r}{p}-1\right)\mu}\, ,
\end{equation}
while the second term becomes
\begin{equation}\label{secondterm}
l^{-1}\|u\|_{q,I}^{\frac{q}{p}}=\alpha^{-1}
\|u''\|_{r,I}^{-\lambda}\|u\|_{q,I}^{-\mu+\frac{q}{p}}\, .
\end{equation}
If we look for $\lambda,\mu$ such that exponents of $\|u''\|_{r,I}$, $\|u\|_{q,I}$, coincide, we get easily $\lambda=-1/2$, $\mu=q/2r$, which, inserted into the right hand sides of \eqref{firstterm} and/or \eqref{secondterm}, give terms of the type $\|u''\|_{r,I}^{\frac{1}{2}}\|u\|_{q,I}^{\frac{q}{p}-\frac{q}{2r}}$. Taking into account that E. Gagliardo got the inequality in the case \eqref{gacase}, both exponents are exactly $1/2$, i.e. we get the right hand side term of \eqref{GGNIIproven} when $j=1$, $k=2$, as asserted.
\end{Remark}

The third and last lemma should be compared with inequality (2.5) in \cite{Nire1}, proven by L. Nirenberg simply telling that it follows from \eqref{unodimens} \sl by integrating with respect to the other variables and applying H\"older's inequality. \rm

In his proof L. Nirenberg completely omits the step showing the Theorem\ref{MAINRproven} claiming that \textit{the proof is done by induction}.
\bigskip

\noindent
\it Acknowledgments. \rm The second author has been partially supported by the Gruppo Nazionale per l'Analisi Matematica, la Probabilit\`a e le loro Applicazioni (GNAMPA) of the Istituto Nazionale di Alta Matematica (INdAM) and by Universit\`a di Napoli Parthenope through the project ``Sostegno alla Ricerca individuale'' (triennio 2015-2017),
the third author was supported by GA\v{C}R 18--00960Y, and the fourth author was supported by EF-IGS2017-Soudsk\' y-IGS07P1.


\end{document}